\title{Covering compact metric spaces greedily}
\author{Jan Hendrik Rolfes} 
\address{J.H.~Rolfes, Mathematisches Institut, Universit\"at zu
 K\"oln, Weyertal~86--90, 50931 K\"oln, Germany}
\email{j.rolfes@uni-koeln.de}
\author{Frank Vallentin} 
\address{F.~Vallentin, Mathematisches Institut, Universit\"at zu
 K\"oln, Weyertal~86--90, 50931 K\"oln, Germany}
\email{frank.vallentin@uni-koeln.de}
\thanks{The second author is partially supported by the
SFB/TRR 191 ``Symplectic Structures in Geometry,
Algebra and Dynamics'', funded by the DFG}
\date{January 22, 2018}
\subjclass{52C17, 90C27} 
\keywords{geometric covering problems, set cover, greedy algorithm}
\newcommand{\R}{\mathbb{R}}
\newtheorem{defin}{Definition}[section]
\newtheorem{theorem}[defin]{Theorem}
\newtheorem{corollary}[defin]{Corollary}
\newtheorem{lemma}[defin]{Lemma}
\begin{document}

\begin{abstract} 
  A general greedy approach to construct coverings of compact metric
  spaces by metric balls is given and analyzed. The analysis is a
  continuous version of Chv\'atal's analysis of the greedy algorithm
  for the weighted set cover problem. The approach is demonstrated in
  an exemplary manner to construct efficient coverings of the
  $n$-dimensional sphere and $n$-dimensional Euclidean space to give
  short and transparent proofs of several best known bounds obtained
  from deterministic constructions in the literature on sphere
  coverings.
\end{abstract}

\maketitle

\markboth{J.H.~Rolfes and F.~Vallentin}{Covering compact metric spaces greedily}

\section{Introduction}

Let $X$ be a compact metric space having metric $d$. Given a scalar
$r\in\R_{\geq 0}$ we define the \emph{closed ball} of radius $r$
around center $x \in X$ by
\[
B(x,r) = \{ y \in X : d(x,y) \leq r\}.
\]

The \emph{covering number} of the space $X$ and a positive number $r$
is
\[
\mathcal{N}(X,r) = \min \left\{ |Y| :  Y \subseteq X,\, \bigcup_{y \in Y} B(y,r) = X \right\},
\]
i.e.\ it is the smallest number of balls with radius $r$ one needs to
cover~$X$. Determining the covering number is a fundamental problem in
metric geometry (see for example the classical book by Rogers
\cite{Rogers1964a}) with many applications: compressive sensing
\cite{Foucart2013a}, approximation theory and machine learning
\cite{Cucker2002a} --- to name a few.

\smallskip

In this paper we are concerned with compact metric spaces which carry
a probability measure $\omega$; a Borel measure normalized by
$\omega(X) = 1$. We will assume that this probability measure behaves
homogeneously on balls and is non degenerate, i.e.\ it satisfies the
following two conditions:
\begin{enumerate}
\item[(a)] $\omega(B(x,s))=\omega(B(y,s))$ for all $x,y\in X$, and for
 all $s \geq 0$,
\item[(b)] $\omega(B(x,\varepsilon))>0$ for all $x\in X$, and for all $\varepsilon>0$.
\end{enumerate}

By (a) the measure of a ball does only depend on the radius $s$ and not
on the center $x$, so we simply denote $\omega(B(x,s))$ by $\omega_s$
throughout the paper.

\begin{theorem}
\label{thm:main}
Let $(X,d)$ be a compact metric space with probability measure
$\omega$ satisfying conditions~(a) and~(b).  Then for every
$\varepsilon$ with $r/2 > \varepsilon > 0$ the covering number satisfies
\[
\frac{1}{\omega_r} \leq \mathcal{N}(X,r) \leq
\frac{1}{\omega_{r-\varepsilon}}
\left(\ln\left(\frac{\omega_{r-\varepsilon}}{\omega_\varepsilon}\right)+1\right).
\]
\end{theorem}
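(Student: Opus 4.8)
\medskip

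\noindent\textbf{Proof plan.}
The lower bound is the volume bound: if $Y\subseteq X$ covers $X$ by balls of radius $r$, then $1=\omega(X)\le\sum_{y\in Y}\omega(B(y,r))=|Y|\,\omega_r$, so $\mathcal{N}(X,r)\ge 1/\omega_r$. For the upper bound I would run a greedy covering procedure and analyze it as a continuous version of Chv\'atal's argument. Fix $\varepsilon$ with $0<\varepsilon<r/2$; then $r-\varepsilon>\varepsilon>0$, so $B(x,\varepsilon)\subseteq B(x,r-\varepsilon)$ and $0<\omega_\varepsilon\le\omega_{r-\varepsilon}$ by~(b). Put $U_0=X$; having chosen centers $x_1,\dots,x_{j-1}$ and set $U_{j-1}=X\setminus\bigcup_{i<j}B(x_i,r-\varepsilon)$, choose $x_j\in X$ maximizing $\omega\big(B(x,r-\varepsilon)\cap U_{j-1}\big)$, and stop at the first index $k$ with $\sup_{x\in X}\omega\big(B(x,r-\varepsilon)\cap U_k\big)<\omega_\varepsilon$. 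Each executed step removes at least $\omega_\varepsilon$ of measure, so the procedure halts after $k\le 1/\omega_\varepsilon$ steps; the maximum is attained since $x\mapsto\omega(B(x,r-\varepsilon)\cap U)$ is upper semicontinuous and $X$ is compact (and the $U_j$ are Borel, being complements of finite unions of closed balls).

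Two facts drive the analysis. (i) The balls of the \emph{original} radius $r$ already cover $X$: for $y\in U_k$, the stopping inequality applied with $x=y$ gives $\omega(B(y,r-\varepsilon)\cap U_k)<\omega_\varepsilon$, and since $B(y,\varepsilon)\subseteq B(y,r-\varepsilon)$ we get $\omega(B(y,\varepsilon)\cap U_k)<\omega_\varepsilon=\omega(B(y,\varepsilon))$, so $B(y,\varepsilon)\not\subseteq U_k$; choosing $z\in B(y,\varepsilon)\setminus U_k$, say $z\in B(x_i,r-\varepsilon)$, yields $d(x_i,y)\le(r-\varepsilon)+\varepsilon=r$. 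Hence $\{B(x_i,r):1\le i\le k\}$ covers $X$ and $\mathcal{N}(X,r)\le k$. (ii) For each $j\le k$ the greedy gain $c_j:=\omega(B(x_j,r-\varepsilon)\cap U_{j-1})=\omega(U_{j-1})-\omega(U_j)$ satisfies $c_j\ge\omega_\varepsilon$ (the stopping rule has not yet triggered) and $c_j\ge\omega(B(x,r-\varepsilon)\cap U_{j-1})$ for \emph{every} $x\in X$ (maximality of the greedy choice).

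To bound $k$, assign to each $y\in X$ the price $p(y)=1/c_{j(y)}$ if $j(y):=\min\{j:y\in B(x_j,r-\varepsilon)\}$ exists and $p(y)=0$ otherwise; since the layer $\{y:j(y)=j\}=B(x_j,r-\varepsilon)\cap U_{j-1}$ has measure $c_j$, we get $\int_X p\,d\omega=\sum_{j=1}^{k}c_j/c_j=k$. As $\omega(B(y,r-\varepsilon))=\omega_{r-\varepsilon}$ for every $y$, Tonelli's theorem gives
\begin{align*}
k=\int_X p\,d\omega
&=\frac{1}{\omega_{r-\varepsilon}}\int_X p(y)\,\omega(B(y,r-\varepsilon))\,d\omega(y)\\
&=\frac{1}{\omega_{r-\varepsilon}}\int_X\!\Big(\int_{B(x,r-\varepsilon)}p(y)\,d\omega(y)\Big)\,d\omega(x)
\le\frac{1}{\omega_{r-\varepsilon}}\,\sup_{x\in X}\int_{B(x,r-\varepsilon)}p\,d\omega .
\end{align*}
Fix a reference ball $O=B(x,r-\varepsilon)$ and set $b_j=\omega(O\cap U_{j-1})$, so $b_1=\omega_{r-\varepsilon}$, the $b_j$ are non-increasing, $b_{k+1}:=\omega(O\cap U_k)<\omega_\varepsilon$ (stopping rule with this $x$), and $\int_O p\,d\omega=\sum_{j=1}^k(b_j-b_{j+1})/c_j$. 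Let $m=\max\{j\le k:b_j\ge\omega_\varepsilon\}$, which exists because $b_1\ge\omega_\varepsilon$. For $j<m$ use $(b_j-b_{j+1})/c_j\le(b_j-b_{j+1})/b_j\le\ln(b_j/b_{j+1})$; these telescope to $\ln(\omega_{r-\varepsilon}/b_m)$. For $j=m$ use $(b_m-b_{m+1})/c_m\le(b_m-b_{m+1})/b_m$, and for $j>m$ use $(b_j-b_{j+1})/c_j\le(b_j-b_{j+1})/\omega_\varepsilon$, which sum to at most $b_{m+1}/\omega_\varepsilon$. Adding up and invoking $b_{m+1}<\omega_\varepsilon\le b_m\le\omega_{r-\varepsilon}$ together with the elementary inequality $\ln t+1-t\le0$ on $(0,1]$ (applied to $t=\omega_\varepsilon/b_m$) collapses the bound to $\int_O p\,d\omega\le\ln(\omega_{r-\varepsilon}/\omega_\varepsilon)+1$. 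Hence $\mathcal{N}(X,r)\le k\le\frac{1}{\omega_{r-\varepsilon}}\big(\ln(\omega_{r-\varepsilon}/\omega_\varepsilon)+1\big)$.

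I expect this last estimate to be the main obstacle. In Chv\'atal's discrete argument each set is touched by only finitely many greedy steps, whereas here a reference ball $O$ can keep losing arbitrarily small slivers of measure, so the stopping threshold and the split index $m$ must be chosen so that the harmonic-type sum telescopes to exactly $\ln(\omega_{r-\varepsilon}/\omega_\varepsilon)+1$ rather than to something larger; the two lower bounds $c_j\ge\omega_\varepsilon$ and $c_j\ge b_j$, coming respectively from the stopping rule and from greedy maximality, are precisely what controls the two regimes $b_j<\omega_\varepsilon$ and $b_j\ge\omega_\varepsilon$. The remaining points (Borel measurability of the $U_j$, attainment of the greedy maximum, and the degenerate cases $m=1$ and $m=k$) are routine.
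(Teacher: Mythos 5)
Your proposal is correct and follows essentially the same route as the paper: the same greedy selection of $(r-\varepsilon)$-balls maximizing newly covered measure, the same triangle-inequality argument that the radius-$r$ balls cover at termination, and your pricing function $p$ is exactly the paper's dual-feasible function $g$, with the Tonelli swap and the split of the per-ball sum at the threshold $\omega_\varepsilon$ (using $c_j\ge b_j$ above it and $c_j\ge\omega_\varepsilon$ below it) matching the paper's analysis. The remaining differences are cosmetic: you stop when the maximal gain drops below $\omega_\varepsilon$ rather than when the $r$-balls already cover, you invoke upper semicontinuity of $x\mapsto\omega(B(x,r-\varepsilon)\cap U)$ (which is true and suffices) where the paper proves continuity in a separate lemma, and you bound the harmonic-type sum termwise via $1-t\le-\ln t$ where the paper uses a lower Riemann sum of $1/x$ on $[\omega_\varepsilon,\omega_{r-\varepsilon}]$.
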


The lower bound is obvious (using the $\sigma$-subadditivity of
$\omega$). We give a proof for the upper bound in
Section~\ref{sec:proof}. Our proof is based on a greedy approach to
covering. We iteratively choose balls which cover the maximum measure
of yet uncovered space.

This greedy algorithm has been analyzed in the finite setting of the
\textsc{set cover} problem which is a fundamental problem in
combinatorial optimization. The \textsc{set cover} problem is defined
as follows. Given a collection $S_1, \ldots, S_m$ of the ground set
$\{1, \ldots, n\}$ and given costs $c_1, \ldots, c_m$ the task is find
a set of indices $I \subseteq \{1, \ldots, m\}$ such that
$\bigcup_{i \in I} S_i = \{1, \ldots, n\}$ and $\sum_{i \in I} c_i$ is
as small as possible.

Computationally, the \textsc{set cover} problem is difficult; Dinur
and Steurer \cite{Dinur2014a} showed that for every $\varepsilon > 0$
it is $\mathrm{NP}$-hard to find an approximation to the \textsc{set
  cover} problem within a factor of $(1-\varepsilon) \ln n$.

On the other hand, Chv\'atal \cite{Chvatal1979a} (previously, Johnson
\cite{Johnson1974a}, Stein \cite{Stein1974a} and Lov\'asz
\cite{Lovasz1975a} proved similar results for the case of uniform
costs $c_1 = \ldots = c_m = 1$) showed that the greedy algorithm gives
an $(\ln n + 1)$-approximation for the \textsc{set cover}
problem. More specifically, Chv\'atal showed that the natural linear
programming relaxation of \textsc{set cover}
\[
\begin{split}
\text{minimize } & \;\sum_{i=1}^m c_i x_i\\
\text{subject to } & \; x_1, \ldots, x_m \geq 0\\
& \; \sum_{i : j \in S_i} x_i \geq 1 \text{ for all } j = 1, \ldots, n
\end{split}
\]
is at most a factor of
$H_k = \sum_{n=1}^k \frac{1}{n} \leq \ln k + 1$, with
$k = \max_i |S_i|$, away from an optimal solution of \textsc{set
  cover}. He proved this bound by exhibiting an appropriate feasible
solution of the dual of the linear programming relaxation. The greedy
algorithm is used to construct this feasible solution.

In Section~\ref{sec:proof} we transfer Chv\'atal's argument from the
finite \textsc{set cover} setting to the setting of compact metric
spaces. Function $g$ appearing there features the feasible solution of
the dual linear program. This will provide a proof of
Theorem~\ref{thm:main}. In Section~\ref{sec:applications} we apply
Theorem~\ref{thm:main} to three concrete geometric settings and we
retrieve some of the best known asymptotic results, unifying many
results on sphere coverings.

We think that the $\mathrm{NP}$-hardness of getting
$(1-\varepsilon)\ln n$-approximations for the \textsc{set cover}
problem is a natural barrier for getting better asymptotic results for
geometric covering problems. This might serve as an explanation why
progress for example on the sphere covering problem has been very slow
since the initial work of Rogers \cite{Rogers1964a}.

We are not the first observing the strong relation between geometric
covering problems and \textsc{set cover}\footnote{In fact, we realized
  this only after we, in an attempt to understand geometric covering
  problems from an optimization point of view, wrote down the main
  body of this paper.}. In recent papers, Artstein-Avidan and Raz
\cite{Artstein2011a}, Artstein-Avidan and Slomka \cite{Artstein2015a}
and especially Nasz\'odi \cite{Naszodi2016a} used the results of
Lov\'asz \cite{Lovasz1975a} to unify old results and prove new results
on geometric coverings. However, they apply the results from
\textsc{set cover} directly after choosing a finite
$\varepsilon$-net. Since we consider an infinite analogue of
\textsc{set cover} we do not need to use an $\varepsilon$-net and by
this we sometimes get slightly better constants and more importantly
we think that the analysis becomes rather beautiful.

Using the relation between geometric covering problems and \textsc{set
  cover} has already turned out to be fruitful: Prosanov
\cite{Prosanov2017a} found new upper bounds for the chromatic number
of distance graphs on the unit sphere, Nasz\'odi and Polyanskii
\cite{Naszodi2017a} studied multi covers by this approach.

\section{Proof of Theorem~\ref{thm:main}}
\label{sec:proof}

We shall prove that the following greedy algorithm
(Algorithm~\ref{algo:Chvcover}) will provide a covering of $X$ with at
most
\[
\frac{1}{\omega_{r-\varepsilon}}
\left(\ln\left(\frac{\omega_{r-\varepsilon}}{\omega_\varepsilon}\right)+1\right)
\]
many balls of radius $r$.

\begin{algorithm}[h]
	\caption{Greedy algorithm}\label{algo:Chvcover}
	\begin{algorithmic}[1]
\State $i \gets 0$
\State $S_x^i = B(x,r-\varepsilon)$ for all $x\in X$
\While {$\bigcup_{j=1}^i B(y^j,r) \neq X$} 
   \State $i \gets i+1$ 
	\State Choose $y\in X$ with $\omega(S_y^{i-1})\geq \omega(S_x^{i-1})$ for all $x\in X$
	\State $y^i = y$ 
	\State $S_x^i = S_x^{i-1}\setminus S_y^{i-1}$ for all $x\in X$
\EndWhile
\end{algorithmic}
\end{algorithm}

We split the proof into three lemmas where the following identity will
become important:
\begin{equation}
\label{eq:fundamental}
S_x^{i-1} = B(x,r-\varepsilon) \setminus \bigcup_{j=1}^{i-1} B(y^j, r-\varepsilon).
\end{equation}

\smallskip

The first lemma states that the step of the algorithm when we want to
choose $y \in X$, with $\omega(S^{i-1}_y) \geq \omega(S^{i-1}_x)$ for
all $x \in X$, is indeed well-defined.

\begin{lemma}
\label{lem:welldefined}
In every iteration $i$ the supremum $\sup\{\omega(S_x^{i-1}) : x \in X\}$
is attained.
\end{lemma}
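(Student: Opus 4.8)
The plan is to show that the functional $x \mapsto \omega(S_x^{i-1})$ is upper semicontinuous on the compact space $X$, so that it attains its supremum. We argue by induction on $i$. For $i=1$ we have $S_x^0 = B(x,r-\varepsilon)$, and using condition (a) the measure $\omega(B(x,r-\varepsilon)) = \omega_{r-\varepsilon}$ is constant in $x$, so the supremum is trivially attained. For the inductive step, by the identity \eqref{eq:fundamental} we have $S_x^{i-1} = B(x,r-\varepsilon) \setminus \bigcup_{j=1}^{i-1} B(y^j,r-\varepsilon)$, so it suffices to understand how $\omega\bigl(B(x,r-\varepsilon) \setminus K\bigr)$ behaves as $x$ varies, where $K = \bigcup_{j=1}^{i-1} B(y^j,r-\varepsilon)$ is a fixed closed set.

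The key point I would establish is that $x \mapsto \omega(B(x,\rho))$, and more generally $x \mapsto \omega(B(x,\rho) \setminus K)$ for a fixed closed set $K$, is upper semicontinuous. The natural approach is: if $x_k \to x$ in $X$, then I would like to say that $\limsup_k \omega(B(x_k,\rho)) \le \omega(B(x,\rho))$. Intuitively $B(x_k,\rho)$ is eventually contained in a slightly larger ball $B(x,\rho+\delta)$ around the limit; combined with outer regularity of the Borel measure $\omega$ — every closed ball $B(x,\rho) = \bigcap_{\delta>0} B(x,\rho+\delta)$ is a decreasing intersection of the open neighborhoods $\{y : d(x,y) < \rho + \delta\}$, so $\omega(B(x,\rho+\delta)) \downarrow \omega(B(x,\rho))$ as $\delta \downarrow 0$ by continuity from above of the measure — this gives the upper semicontinuity. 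To incorporate the removed set $K$, I would write $\omega(B(x,\rho)\setminus K) = \omega(B(x,\rho)) - \omega(B(x,\rho)\cap K)$ and note that $x \mapsto \omega(B(x,\rho)\cap K)$ is lower semicontinuous by a symmetric argument (using continuity from below and the fact that the open ball around $x$ is an increasing union of closed balls $B(x,\rho-\delta)$), so the difference is upper semicontinuous.

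Then I would invoke the standard fact that an upper semicontinuous real-valued function on a compact space attains its maximum, which finishes the proof. Concretely: take a maximizing sequence $x_k$ with $\omega(S_{x_k}^{i-1}) \to \sup\{\omega(S_x^{i-1}) : x \in X\}$; by compactness pass to a convergent subsequence $x_k \to y$; upper semicontinuity gives $\omega(S_y^{i-1}) \ge \limsup_k \omega(S_{x_k}^{i-1}) = \sup\{\omega(S_x^{i-1}) : x \in X\}$, so the supremum is attained at $y$.

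The main obstacle I anticipate is the measure-theoretic bookkeeping around the removed set $K$: one has to be a little careful that $B(x_k,\rho) \subseteq B(x,\rho+\delta)$ holds for all large $k$ (which is immediate from the triangle inequality once $d(x_k,x) < \delta$) and that continuity from above of $\omega$ applies, which it does since $X$ is compact so all the balls have finite measure. An alternative, perhaps cleaner, route avoiding semicontinuity subtleties would be to observe that it suffices to prove the weaker statement that the supremum is attained "up to $\varepsilon$-slack" at each step — i.e. one could run a modified greedy algorithm choosing a near-optimal ball — but since the homogeneity condition (a) makes the first step exact and the identity \eqref{eq:fundamental} keeps everything expressed in terms of balls of the single radius $r-\varepsilon$, the upper semicontinuity argument above is clean enough to carry out directly.
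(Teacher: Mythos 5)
Your overall strategy --- upper semicontinuity of $x \mapsto \omega(S_x^{i-1})$ plus compactness, rather than the full continuity that the paper establishes via a symmetric-difference estimate and monotone convergence --- is sound, and the conclusion is correct. But the one step you justify in detail has a genuine flaw: the claimed lower semicontinuity of $x \mapsto \omega(B(x,\rho)\cap K)$. Continuity from below along the closed balls $B(x,\rho-\delta)$ only yields
\[
\liminf_{x_k \to x} \omega\bigl(B(x_k,\rho)\cap K\bigr) \;\geq\; \omega\bigl(\{y : d(x,y) < \rho\}\cap K\bigr),
\]
because $\bigcup_{\delta>0} B(x,\rho-\delta)$ is the \emph{open} ball, not the closed ball $B(x,\rho)$. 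Under hypotheses (a) and (b) the function $s\mapsto\omega_s$ is right-continuous (continuity from above, which you use correctly on the other side), but it need not be left-continuous: the sphere $\{y : d(x,y)=\rho\}$ may carry positive measure, so the open- and closed-ball quantities can differ, and your argument does not give lower semicontinuity of the closed-ball function. (One can deduce that lower semicontinuity a posteriori from condition (a) together with upper semicontinuity of the difference, but that would be circular given how you want to use it.)

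The repair is already contained in the first half of your own argument: apply the outer-approximation step directly to $B(x,\rho)\setminus K$ instead of decomposing. For $d(x_k,x)\leq\delta$ one has $B(x_k,\rho)\setminus K \subseteq B(x,\rho+\delta)\setminus K$, and $\bigcap_{\delta>0}\bigl(B(x,\rho+\delta)\setminus K\bigr) = B(x,\rho)\setminus K$ since the closed ball is the intersection of the larger closed balls; continuity from above of the probability measure then gives $\limsup_k \omega\bigl(B(x_k,\rho)\setminus K\bigr) \leq \omega\bigl(B(x,\rho)\setminus K\bigr)$, i.e.\ the desired upper semicontinuity, where $K=\bigcup_{j=1}^{i-1}B(y^j,r-\varepsilon)$ is closed as a finite union of closed balls. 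Note that this needs neither condition (a) nor your induction (the case $K=\emptyset$ handles $i=1$). With that one-line fix your proof goes through, and it is in fact a slightly more economical route than the paper's, which proves genuine continuity of $f_i$ and relies on condition (a) for the annulus estimate.
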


\begin{proof}
 We shall show that the function $f_i \colon X \to \R$,
 $f_i(x)=\omega(S_x^{i-1})$ is continuous for every iteration
 $i$. This implies that $f_i$ attains its maximum since $X$ is
 compact.

For $x, y \in X$ we have
\[
\begin{split} |f_i(x)-f_i(y)| 
& = |\omega(S_x^{i-1}) - \omega(S_y^{i-1})|\\
& = |\omega(S_x^{i-1} \setminus S_y^{i-1}) + \omega(S_x^{i-1}
 \cap S_y^{i-1})\\
& \qquad - (\omega(S_y^{i-1} \setminus S_x^{i-1}) +
 \omega(S_y^{i-1} \cap S_x^{i-1}))|\\
& = |\omega(S_x^{i-1} \setminus S_y^{i-1}) - \omega(S_y^{i-1}\setminus S_x^{i-1})|\\
& \leq \max\{\omega(S_x^{i-1}\setminus S_y^{i-1}), \omega(S_y^{i-1}\setminus S_x^{i-1})\}.
\end{split}
\]
Without loss of generality, the maximum is attained
at $\omega(S_x^{i-1}\setminus S_y^{i-1})$. Then by \eqref{eq:fundamental} we see
\[
S_x^{i-1}\setminus S_y^{i-1} \subseteq B(x,r - \varepsilon) \setminus B(y,r -
\varepsilon).
\]
By the triangle inequality
\[
B(x,r - \varepsilon) \setminus B(y,r -
\varepsilon) \subseteq B(y,r - \varepsilon +d(x,y))\setminus B(y,r - \varepsilon).
\]
Now consider the indicator function
$\mathbbm{1}_{B(y,r - \varepsilon +d(x,y))\setminus B(y,r -
 \varepsilon)}$.
When $y$ tends to $x$, then we have a monotonously decreasing sequence
of measurable functions tending to $0$. By applying the theorem of
monotone convergence we obtain that the integral
\[
\int \mathbbm{1}_{B(y,r - \varepsilon +d(x,y))\setminus B(y,r -
\varepsilon)}(z)\, d\omega(z)
\]
tends to $0$ as well. Hence, $f_i(y)$ tends to $f_i(x)$.
\end{proof}

The second lemma states that the algorithm terminates after finitely
many iterations.

\begin{lemma}\label{lem:termination}
 Algorithm \ref{algo:Chvcover} terminates after at most
 $\omega_\varepsilon^{-1}$ iterations and returns a covering.
\end{lemma}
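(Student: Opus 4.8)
The plan is to establish two claims: first, that any two distinct centers $y^i, y^j$ chosen by the algorithm satisfy $d(y^i,y^j) > r-\varepsilon$, and second, that this packing condition forces the number of iterations to be bounded by $\omega_\varepsilon^{-1}$; the termination and the fact that the output is a covering then follow essentially from the \textbf{while} loop condition.

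For the first claim, suppose $j < i$. At the moment we choose $y^i = y$, we remove $S_y^{i-1}$ from all the sets $S_x$, so by construction $\omega(S_y^{i-1}) > 0$ (otherwise the algorithm would have covered nothing new; more carefully, if the supremum were $0$ then every $S_x^{i-1}$ has measure zero, and I would want to argue separately that this cannot happen before the loop terminates, using the identity \eqref{eq:fundamental} together with condition~(b) and the covering condition still failing). Granting $\omega(S_y^{i-1}) > 0$, the set $S_y^{i-1} = B(y,r-\varepsilon) \setminus \bigcup_{\ell=1}^{i-1} B(y^\ell, r-\varepsilon)$ is nonempty (in fact of positive measure), so in particular $y \notin B(y^j, r-\varepsilon)$, i.e.\ $d(y^i, y^j) > r - \varepsilon$. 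Actually it is cleaner to argue: if $d(y^i,y^j) \le r-\varepsilon$ then $B(y^i, r-\varepsilon) \subseteq B(y^j, 2(r-\varepsilon)) $ — no, the clean statement is that $y^i \in B(y^j, r-\varepsilon)$ would give, via \eqref{eq:fundamental}, that $S_{y^i}^{i-1}$ excludes a neighborhood of $y^i$ and then one shows $\omega(S_{y^i}^{i-1})=0$, contradicting that it was chosen maximal among values that must stay positive. I will phrase it as: since $y^j \in B(y^j, r-\varepsilon)$ for $j<i$, the point $y^i$ must lie outside $B(y^j, r-\varepsilon)$, for otherwise $\omega(S_{y^i}^{i-1}) = 0$ and then the loop would have already terminated, a contradiction — wait, that is not immediate either. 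Let me instead settle on: if $d(y^i, y^j) \le r - \varepsilon$ for some $j < i$, then $B(y^i, \varepsilon) \subseteq B(y^j, r-\varepsilon)$ provided — no. I will keep the argument at the level of: the chosen $y^i$ has $\omega(S_{y^i}^{i-1})$ equal to the positive supremum (positivity because the loop has not terminated, so some uncovered point exists, a small ball around it has positive $\omega$-measure by~(b), and it lies in some $S_x^{i-1}$), hence $S_{y^i}^{i-1}$ has positive measure, hence contains points outside every $B(y^j, r-\varepsilon)$, $j < i$, and in particular $y^i$ itself is not within distance $r - \varepsilon$ of any earlier $y^j$ — this last step needs that $y^i \in B(y^i, r-\varepsilon)$ lies in $S_{y^i}^{i-1}$, which again uses~(b) around $y^i$. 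So the correct chain is: $B(y^i, \delta) \cap S_{y^i}^{i-1}$ has positive measure for small $\delta$, forcing $B(y^i,\delta) \not\subseteq \bigcup_{j<i} B(y^j, r-\varepsilon)$, and letting $\delta \to 0$, $y^i \notin \bigcup_{j<i} \interior$-ish... The truly clean route, which I will take: show directly that the sets $B(y^i, \varepsilon)$, $i = 1, 2, \ldots$, are pairwise disjoint.

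Indeed, here is the argument I will commit to. At iteration $i$, before choosing $y^i$, the loop condition says $\bigcup_{j=1}^{i-1} B(y^j, r) \ne X$, so there is an uncovered point $z$; then $z \in B(z, r-\varepsilon) = S_z^0$, and I claim $z \in S_z^{i-1}$, i.e.\ $z \notin \bigcup_{j=1}^{i-1} B(y^j, r-\varepsilon)$, which holds since $z \notin \bigcup_{j=1}^{i-1} B(y^j, r) \supseteq \bigcup_{j=1}^{i-1} B(y^j, r-\varepsilon)$. Hence $\omega(S_z^{i-1}) \ge \omega(B(z,\varepsilon') )$ for... no, I just need $\omega(S_z^{i-1}) > 0$: since $B(z, \varepsilon) \subseteq B(z, r-\varepsilon)$ (using $\varepsilon < r/2 < r-\varepsilon$) and $d(z, y^j) > r$ means $d(z,y^j) > r - \varepsilon + \varepsilon$, so for $w \in B(z,\varepsilon)$ we get $d(w, y^j) \ge d(z,y^j) - \varepsilon > r - \varepsilon$, hence $B(z,\varepsilon) \cap B(y^j, r - \varepsilon) = \emptyset$ for all $j < i$; therefore $B(z,\varepsilon) \subseteq S_z^{i-1}$ and $\omega(S_z^{i-1}) \ge \omega_\varepsilon > 0$ by~(b). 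Consequently the chosen maximizer satisfies $\omega(S_{y^i}^{i-1}) \ge \omega_\varepsilon$. Since the sets $S_{y^1}^{0}, S_{y^2}^1, \ldots$ removed at successive steps are pairwise disjoint (each $S_{y^i}^{i-1}$ is disjoint from all previously removed sets, as these have been subtracted off) and each has measure $\ge \omega_\varepsilon$, after $k$ iterations we have removed measure $\ge k\,\omega_\varepsilon$ from the total measure $1 = \omega(X)$; hence $k \le \omega_\varepsilon^{-1}$, so the loop runs at most $\omega_\varepsilon^{-1}$ times. It must terminate (finitely many iterations), and upon termination the loop condition fails, i.e.\ $\bigcup_{j=1}^i B(y^j, r) = X$, so the output is a covering. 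The main obstacle, and the only place requiring care, is verifying the lower bound $\omega(S_{y^i}^{i-1}) \ge \omega_\varepsilon$ — specifically exhibiting the ball $B(z,\varepsilon)$ sitting inside the still-uncovered region $S_z^{i-1}$ — which is where the hypothesis $r/2 > \varepsilon > 0$ (ensuring $r - \varepsilon > \varepsilon$) and condition~(b) both get used.
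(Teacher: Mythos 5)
Your final committed argument is correct and is essentially the paper's own proof: exhibit an uncovered point $z$, use the triangle inequality to show $B(z,\varepsilon)\subseteq S_z^{i-1}$, conclude $\omega(S_{y^i}^{i-1})\geq\omega_\varepsilon>0$ by (b) and maximality, and then use pairwise disjointness of the sets $S_{y^j}^{j-1}$ together with $\omega(X)=1$ to bound the number of iterations by $\omega_\varepsilon^{-1}$. The earlier detour about a packing condition $d(y^i,y^j)>r-\varepsilon$ is unnecessary and rightly abandoned; the argument you settle on is sound and matches the paper.
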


\begin{proof}
 Consider the $i$-th iteration of the algorithm and suppose there
 exists $z\in X$ with $z\notin \bigcup_{j=1}^{i-1} B(y^j,r)$. From
 the triangle inequality it follows that
\[
B(z,\varepsilon)\cap B(y^j,r-\varepsilon) = \emptyset.
\]
Together with \eqref{eq:fundamental} it implies that
$B(z,\varepsilon)\subseteq S_z^{i-1}$. Choose $y \in X$ with
$\omega (S^{i-1}_y) \geq \omega (S^{i-1}_x)$ for every $x \in X$. Hence
we have
\[
\omega(S^{i-1}_y) \geq \omega (S^{i-1}_z) \geq
\omega(B(z,\varepsilon)) = \omega_\varepsilon >0,
\]
where $\omega_\varepsilon$ is positive by assumption (b) and thus
\[
1 = \omega(X) \geq \sum_{j=1}^i\omega(S_{y^j}^{j-1})\geq
i\cdot\omega_\varepsilon,
\]
where the first inequality follows because the sets
$S_{y^j}^{j-1}$, with $j = 1, \ldots, i$, are pairwise
disjoint.  So after at most $\omega_\varepsilon^{-1}$
iterations, the algorithm terminates with a covering.
\end{proof}

The third lemma gives the desired upper bound for the covering number.

\begin{lemma}
Algorithm~\ref{algo:Chvcover} terminates after at most 
\[
\frac{1}{\omega_{r-\varepsilon}}
\left(\ln\left(\frac{\omega_{r-\varepsilon}}{\omega_{\varepsilon}}\right)
  + 1\right)
\]
iterations. In particular, this number gives an upper bound for the
covering number $\mathcal{N}(X,r)$.
\end{lemma}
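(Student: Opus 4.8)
The plan is to mimic Chv\'atal's charging argument. Let $N$ be the number of iterations the algorithm performs and let $y^1, \ldots, y^N$ be the centers chosen. The key object is a function $g \colon X \to \R_{\geq 0}$ that plays the role of a feasible dual solution; the natural choice is to set, for each point $z \in X$, $g(z) = 1/\omega(S_{y^i}^{i-1})$ where $i$ is the (unique) iteration in which $z$ is removed, i.e.\ $z \in S_{y^i}^{i-1} \setminus S_{y^i}^{i} = S_{y^i}^{i-1}$. Since the sets $S_{y^1}^{0}, S_{y^2}^{1}, \ldots, S_{y^N}^{N-1}$ are pairwise disjoint and — using Lemma~\ref{lem:termination} together with identity~\eqref{eq:fundamental} — cover all of $X$ (every uncovered point sits in its own $S_z^{i-1}$ until removed), the function $g$ is defined $\omega$-almost everywhere.

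First I would compute $\int_X g \, d\omega$ directly: breaking the integral over the disjoint pieces $S_{y^i}^{i-1}$, on which $g$ is the constant $1/\omega(S_{y^i}^{i-1})$, gives $\int_X g\, d\omega = \sum_{i=1}^N \omega(S_{y^i}^{i-1}) \cdot \frac{1}{\omega(S_{y^i}^{i-1})} = N$. So it suffices to bound $\int_X g \, d\omega$ from above by the claimed quantity. Second, I would establish the dual-feasibility-type estimate: for every $x \in X$,
\[
\int_{B(x, r-\varepsilon)} g \, d\omega \;\leq\; \ln\!\left(\frac{\omega_{r-\varepsilon}}{\omega_\varepsilon}\right) + 1.
\]
To see this, fix $x$ and let $i_1 < i_2 < \cdots < i_m$ be the iterations during which the points of $B(x,r-\varepsilon)$ get removed; write $a_k = \omega\big(B(x,r-\varepsilon) \cap S_{y^{i_k}}^{i_k - 1}\big)$, so that by~\eqref{eq:fundamental} the quantity $b_k := \omega(B(x,r-\varepsilon) \setminus \bigcup_{j < i_k} B(y^j, r-\varepsilon)) = \sum_{\ell \geq k} a_\ell$ is the "remaining mass" of $B(x,r-\varepsilon)$ just before step $i_k$, and $b_k - b_{k+1} = a_k$. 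The greedy choice gives $\omega(S_{y^{i_k}}^{i_k-1}) \geq \omega(S_x^{i_k-1}) = b_k$, hence $g \leq 1/b_k$ on the $k$-th piece, so
\[
\int_{B(x,r-\varepsilon)} g\, d\omega \;\leq\; \sum_{k=1}^m \frac{a_k}{b_k} \;=\; \sum_{k=1}^m \frac{b_k - b_{k+1}}{b_k},
\]
with $b_1 \leq \omega_{r-\varepsilon}$ and $b_{m+1} = 0$ while $b_m \geq \omega_\varepsilon$ (each $S_x^{i-1}$ that is nonempty contains a ball $B(x',\varepsilon)$ as in Lemma~\ref{lem:termination}, forcing its measure to be at least $\omega_\varepsilon$ — more precisely $b_m = a_m = \omega(S_x^{i_m-1}) \geq \omega_\varepsilon$ when $x$ itself is the last point of its ball to be covered; one argues this reduction carefully). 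The standard harmonic-sum estimate $\sum_{k} \frac{b_k - b_{k+1}}{b_k} \leq \int_{b_m}^{b_1} \frac{dt}{t} + 1 \leq \ln(\omega_{r-\varepsilon}/\omega_\varepsilon) + 1$ then finishes this step.

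Finally I would combine the two: since $\{B(x, r-\varepsilon) : x \in X\}$ covers $X$ and $\omega(B(x,r-\varepsilon)) = \omega_{r-\varepsilon}$ is a fixed constant, integrating the pointwise bound appropriately (or, cleanly: pick the center $x^*$ of the \emph{last} ball chosen and note every point lies within distance $r - \varepsilon$ of some chosen center, so one can cover $X$ by the balls $B(y^i, r-\varepsilon)$ and average) yields $N = \int_X g\,d\omega \leq \frac{1}{\omega_{r-\varepsilon}} \big(\ln(\omega_{r-\varepsilon}/\omega_\varepsilon) + 1\big)$. I expect the main obstacle to be the bookkeeping in the middle step — in particular, correctly identifying the lower bound $b_m \geq \omega_\varepsilon$ for the last nonempty remnant of $B(x,r-\varepsilon)$ and making sure the telescoping sum is compared to the logarithm with the right endpoints. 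The continuous (measure-theoretic) setting also requires a small amount of care that $g$ is measurable and that the integral decomposes over the countably many disjoint pieces, but Lemma~\ref{lem:termination} guarantees finiteness of $N$, so this is routine.
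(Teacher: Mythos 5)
Your overall strategy is exactly the paper's: the same dual function $g$ (constant $1/\omega(S_{y^i}^{i-1})$ on the disjoint pieces $S_{y^i}^{i-1}$, zero elsewhere), the identity $\int g\,d\omega = N$, a pointwise dual-feasibility bound over $B(x,r-\varepsilon)$, and an averaging/Fubini step using $\omega(X)=1$ and the symmetry $y\in B(x,r-\varepsilon)\Leftrightarrow x\in B(y,r-\varepsilon)$. The first and last steps are fine as sketched (two small slips there: the pieces $S_{y^i}^{i-1}$ only cover $\bigcup_j B(y^j,r-\varepsilon)$, not all of $X$, so you must set $g=0$ on the rest rather than claim a.e.\ coverage; and the parenthetical claim that every point lies within $r-\varepsilon$ of a chosen center is false --- the algorithm only guarantees distance $r$ --- but your primary route of integrating the pointwise bound over $x$ does not need it).

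The genuine gap is the middle step, precisely at the point you flagged: the asserted lower bound $b_m\geq\omega_\varepsilon$ for the last remnant of $B(x,r-\varepsilon)$ is false in general. Lemma~\ref{lem:termination} gives $B(z,\varepsilon)\subseteq S_z^{i-1}$ only for points $z$ not yet covered by the \emph{radius-$r$} balls; a generic $x$ may be covered by some $B(y^j,r)$ early on while $B(x,r-\varepsilon)$ keeps being nibbled in later iterations, so $b_m=\omega(S_x^{i_m-1})$ can be an arbitrarily small sliver. Then your estimate $g\leq 1/b_k$ on the $k$-th piece, combined with $\sum_k (b_k-b_{k+1})/b_k\leq \ln(b_1/b_m)+1$, gives $\ln(\omega_{r-\varepsilon}/b_m)+1$, which can exceed the claimed bound by an unbounded amount; and restricting to those $x$ that happen to be ``last covered in their own ball'' does not help, since dual feasibility is needed for $\omega$-a.e.\ $x$ in the averaging step. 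The repair (which is what the paper does) is to use a \emph{second} bound on $g$: since the algorithm has not yet terminated at iteration $i$, some point is still uncovered by the radius-$r$ balls, so by the argument of Lemma~\ref{lem:termination} the greedy pick satisfies $\omega(S_{y^i}^{i-1})\geq\omega_\varepsilon$, i.e.\ $g\leq \omega_\varepsilon^{-1}$ everywhere. One then splits the sum at the last index with $b_k\geq\omega_\varepsilon$: on the head use $g\leq 1/b_k$ and the lower Riemann sum of $1/t$ on $[\omega_\varepsilon,\omega_{r-\varepsilon}]$, giving $\ln(\omega_{r-\varepsilon}/\omega_\varepsilon)$; on the tail (including a careful treatment of the crossing term) use $g\leq\omega_\varepsilon^{-1}$ and telescoping, giving at most $1$. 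Without this split, the proof as proposed does not go through.
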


\begin{proof}
 Let $Y \subseteq X$ denote the covering produced by Algorithm
 \ref{algo:Chvcover} after $|Y|$ iterations.  We shall prove
\begin{equation}
\label{densitystatementm}
\ln\left(\frac{\omega_{r - \varepsilon}}{\omega_\varepsilon}\right)+ 1  \geq |Y|\cdot \omega_{r - \varepsilon}. 
\end{equation}

For this we define the symmetric kernel $K \colon X\times X \to \mathbb{R}$ by
\[
K(x,y)=
\begin{cases}
1, & \text{if }  y \in B(x,r - \varepsilon)\\
0, & \text{otherwise.}
\end{cases}
\]
For every $x\in X$ the following equality
\[
\int K(x,y) \, d\omega(y) = \omega_{r - \varepsilon}
\]
holds because for every fixed $x \in X$ we have
$K(x,y)=\mathbbm{1}_{B(x,r-\varepsilon)}(y)$ for all $y \in X$. We
will exhibit an integrable function
$g \colon X \rightarrow \mathbb{R}$ satisfying
\begin{equation}
\label{help1m}
\int K(x,y) g(x) \, d\omega(x) \leq \ln\left(\frac{\omega_{r -
   \varepsilon}}{\omega_\varepsilon}\right)+1
\end{equation}
for all $y\in X$ and satisfying
\begin{equation} 
\label{help2m}
\int g(x)\, d\omega(x) = |Y|.
\end{equation}
Combining \eqref{help1m} and \eqref{help2m}, we get
\[
\begin{split}
\ln\left(\frac{\omega_{r-\varepsilon}}{\omega_{\varepsilon}}\right)
 + 1
&  \geq \int \int K(x,y) g(x) \, d\omega(x) d\omega(y)\\
& = \int  g(x) \int K(x,y) \, d\omega(y) d\omega(x) \\
& = \int  g(x) \omega_{r - \varepsilon} \, d\omega(x) \\
& = |Y| \cdot \omega_{r - \varepsilon}
\end{split}
\]
and we have proven~\eqref{densitystatementm}. 

\bigskip

Now we only have to exhibit the function $g$.

\smallskip

For brevity, we denote $\omega^{i-1}_y=\omega(S_y^{i-1})$. We define $g$ as follows:
\[
g(x) =
\begin{cases} 
(\omega_{y^i}^{i-1})^{-1}, & \text{ if } x\in S_{y^i}^{i-1},\\ 
0, & \text{otherwise,}
\end{cases}
\]
which is a valid definition since the sets $S_{y^i}^{i-1}$ are
pairwise disjoint. Also observe that $g$ is an integrable function on
the compact set $X$.

From this definition of~$g$ we immediately get \eqref{help2m}:
\[
\int g(x)\, d\omega(x) = \sum_{i=1}^{|Y|}
\omega_{y^i}^{i-1} (\omega_{y^i}^{i-1})^{-1} = |Y|,
\]

To prove \eqref{help1m} we fix $y \in X$. We observe the equality
\[
B(y,r - \varepsilon)\cap S_{y^i}^{i-1} = S_y^{i-1}\setminus S_y^i,
\]
which describes which part of $B(y,r - \varepsilon)$ is cut away in iteration
$i$. Then,
\[
\begin{split}
\int K(x,y) g(x) \, d\omega(x) 
& = \sum_{i=1}^{|Y|} \int K(x,y) \mathbbm{1}_{S_{y^i}^{i-1} }(x) (\omega^{i-1}_{y^i})^{-1} \, d\omega(x)\\
& = \sum_{i=1}^{|Y|} \int \mathbbm{1}_{S_y^{i-1}\setminus S_y^i}(x) (\omega^{i-1}_{y^i})^{-1} \, d\omega(x)\\
& = \sum_{i=1}^{|Y|} (\omega^{i-1}_y - \omega^i_y) (\omega^{i-1}_{y^i})^{-1}
\end{split}
\]
For $y\in X$ consider the last iteration $b$ such that
\begin{equation}
\label{help4m}
\omega_{r-\varepsilon} = \omega(B(y,r - \varepsilon)) = \omega_y^0\geq \omega_y^1 \geq \ldots \geq \omega_y^b
\geq \omega(B(y, \varepsilon)) = \omega_{\varepsilon}
\end{equation}
holds (here we used $r/2 > \varepsilon$). Note that $b < |Y|$. Note also that $\omega_y^{i-1} \leq
\omega^{i-1}_{y^i}$ holds. We split the sum above into two parts:
\[
\begin{split}
\sum_{i=1}^{|Y|} (\omega^{i-1}_y - \omega^i_y)
(\omega^{i-1}_{y^i})^{-1}
& = \sum_{i=1}^{b} (\omega^{i-1}_y - \omega^i_y)
(\omega^{i-1}_{y^i})^{-1}
+ \sum_{i=b+1}^{|Y|} (\omega^{i-1}_y - \omega^i_y)
(\omega^{i-1}_{y^i})^{-1}\\
& \leq \sum_{i=1}^{b} (\omega^{i-1}_y - \omega^i_y)
(\omega^{i-1}_{y})^{-1}
+ (\omega^b_y - \omega^{b+1}_y) (\omega^b_y)^{-1} \\
& \qquad + \sum_{i =
 b+2}^{|Y|} (\omega^{i-1}_y - \omega^i_y)
\omega_{\varepsilon}^{-1}\\
& \leq \left(\sum_{i=1}^{b} (\omega^{i-1}_y - \omega^i_y)
(\omega^{i-1}_{y})^{-1} + \frac{\omega^b_y -
 \omega_\varepsilon}{\omega^b_y} \right) \\
& \qquad + \left(\frac{\omega_{\varepsilon} -
   \omega^{b+1}_y}{\omega_{\varepsilon}} + \frac{\omega^{b+1}_y - \omega^{|Y|}_y}{\omega_{\varepsilon}}\right).
\end{split}
\]
The first sum is a lower Riemann sum of the function $x \mapsto \frac{1}{x}$ in
the interval $[\omega_\varepsilon, \omega_{r-\varepsilon}]$
and thus we have $\ln\left(\frac{\omega_{r -
   \varepsilon}}{\omega_{\varepsilon}}\right)$ as an
upper bound. The second sum is clearly bounded above by $1$. Hence, \eqref{help1m} holds.
\end{proof}

\section{Applications of Theorem~\ref{thm:main}}
\label{sec:applications}

\subsection{Covering the $n$-dimensional sphere}
\label{ssec:sphere}

As a first application of Theorem~\ref{thm:main} we consider the
problem of covering the $n$-dimensional sphere
\[
X = S^n = \{x \in \mathbb{R}^{n+1} : x \cdot x = 1\},
\]
equipped with spherical distance
\[
d(x,y) = \arccos x \cdot y \in [0,\pi]
\]
and with the rotationally invariant probability measure $\omega$, by
spherical caps / metric balls $B(x,r)$. Clearly, properties (a) and
(b) are satisfied in this setting. Again we set
$\omega_r = \omega(B(x,r))$.

We are especially interested in the covering number
$\mathcal{N}(S^n, r)$ when $0 < r < \pi/2$ or equivalently in the
covering density defined by $\omega_r \cdot \mathcal{N}(S^n,
r)$. Theorem~\ref{thm:main} says that the covering density is at most
\begin{equation}
\label{eq:density-bound}
\frac{\omega_r}{\omega_{r-\varepsilon}} \left(
\ln\left(\frac{\omega_{r-\varepsilon}}{\omega_{\varepsilon}}\right) + 1
\right).
\end{equation}
This upper bounds holds for every $\varepsilon$ with
$0 < \varepsilon < r$. By choosing $\varepsilon$ depending on the
dimension $n$ and on the spherical distance $r$ we can find an upper
bound for the covering density which only depends on $n$.

For this we recall a useful estimate of fractions of the form
$\omega_{tr}/\omega_r$ due to B\"or\"ozky~Jr.\ and Wintsche
\cite{Böröczky2003a}:
\begin{equation}
\label{eq:bw-bound}
\frac{\omega_{tr}}{\omega_r} \leq t^n \quad \text{whenever } r < tr < \frac{\pi}{2}.
\end{equation}
We set $\varepsilon = r/(\mu n +1)$ with parameter $\mu > 1$ which we
are going to adjust later. Furthermore, we set
\[
t = \frac{r}{r-\varepsilon} = 1 + \frac{1}{\mu n}
\]
and
\[
t' =\frac{r-\varepsilon}{\varepsilon} = \mu n.
\]
By using \eqref{eq:density-bound} and \eqref{eq:bw-bound} we have the
following upper bound for the covering density
\[
\begin{split}
\frac{\omega_r}{\omega_{r-\varepsilon}} \left(
\ln\left(\frac{\omega_{r-\varepsilon}}{\omega_{\varepsilon}}\right) + 1
\right) 
& \leq \left( 1 + \frac{1}{\mu n}\right)^n (n \ln \mu n + 1)\\
& \leq e^{1/\mu} (n \ln \mu n + 1)\\
& \leq \left(1 + \frac{1}{\mu - 1}\right)  (n \ln \mu n + 1)
\end{split}
\]

Thus we have proven:

\begin{corollary}
 The covering density of the $n$-dimensional sphere by spherical
 balls is at most
\[
\left(1 + \frac{1}{\mu - 1}\right)  (n \ln \mu n + 1) \text{ for all }
\mu > 1.
\]
In particular, for $\mu = \ln n$, the covering density is at most
\[
n \ln n + n \ln\ln n + n + o(n).
\]
\end{corollary}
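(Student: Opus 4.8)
The plan is to read off the first inequality from the chain of estimates displayed immediately before the statement, and then to specialise $\mu=\ln n$ and expand asymptotically.

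For the first bound, I would start from inequality~\eqref{eq:density-bound}, valid for every $0<\varepsilon<r$, and verify that the choice $\varepsilon=r/(\mu n+1)$ with $\mu>1$ is admissible: since $\mu n+1>2$ we have $\varepsilon<r/2$, so the hypothesis of Theorem~\ref{thm:main} is met, and the two ratios $t=r/(r-\varepsilon)=1+1/(\mu n)$ and $t'=(r-\varepsilon)/\varepsilon=\mu n$ satisfy $r<tr<\pi/2$ and $\varepsilon<t'\varepsilon<\pi/2$ because $0<r<\pi/2$. Hence the B\"or\"oczky--Wintsche estimate~\eqref{eq:bw-bound} applies and gives $\omega_r/\omega_{r-\varepsilon}\le(1+1/(\mu n))^n$ and $\omega_{r-\varepsilon}/\omega_\varepsilon\le(\mu n)^n$, which is exactly the first line of the displayed chain. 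To close the chain I would use the two elementary inequalities $(1+1/(\mu n))^n\le e^{1/\mu}$ (from $1+x\le e^x$) and $e^{1/\mu}\le 1/(1-1/\mu)=1+1/(\mu-1)$ (from $e^x\le 1/(1-x)$ for $x\in(0,1)$, which is where $\mu>1$ enters). This yields the claimed bound $(1+1/(\mu-1))(n\ln\mu n+1)$.

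For the ``in particular'' part I would set $\mu=\ln n$, legitimate for all large $n$, and write $\ln(\mu n)=\ln n+\ln\ln n$, so that the bound reads
\[
\left(1+\frac{1}{\ln n-1}\right)\bigl(n\ln n+n\ln\ln n+1\bigr).
\]
Expanding, the leading contribution is $n\ln n+n\ln\ln n+1$ and the remaining term is $\frac{n\ln n+n\ln\ln n+1}{\ln n-1}$. The identity $\frac{n\ln n}{\ln n-1}=n+\frac{n}{\ln n-1}$ shows this remaining term equals $n+o(n)$, since $\frac{n}{\ln n-1}$, $\frac{n\ln\ln n}{\ln n-1}$ and $\frac{1}{\ln n-1}$ are all $o(n)$. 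Collecting everything gives $n\ln n+n\ln\ln n+n+o(n)$, as claimed.

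I do not anticipate a genuine obstacle: the statement is a repackaging of the preceding computation together with a one-line asymptotic expansion. The only steps that need a little care are checking that the parameter choices respect the hypotheses of Theorem~\ref{thm:main} and of~\eqref{eq:bw-bound}, and tracking the lower-order terms in the final expansion.
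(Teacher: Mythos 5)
Your proposal is correct and follows the paper's own derivation: the displayed chain before the corollary is exactly the argument you reconstruct, using $\varepsilon=r/(\mu n+1)$, the B\"or\"oczky--Wintsche bound \eqref{eq:bw-bound} for both ratios, and the elementary estimates $(1+1/(\mu n))^n\le e^{1/\mu}\le 1+1/(\mu-1)$. Your verification of the admissibility of the parameter choices and the asymptotic expansion at $\mu=\ln n$ fills in details the paper leaves implicit, but it is the same proof.
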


In the asymptotic case the best known bound is $(1/2 + o(1)) n \ln n$
due to Dumer \cite{Dumer2007a} which comes from a randomized
construction. Our corollary slightly improves the previously best
known non-asymptotic bound $n \ln n + n \ln\ln n + 2n + o(n)$ by
B\"or\"ozky~Jr.\ and Wintsche \cite{Böröczky2003a} also coming from a
randomized construction.

\subsection{Covering $n$-dimensional Euclidean space}
\label{ssec:space}

As a second application we consider coverings of $n$-dimensional
Euclidean space~$\mathbb{R}^n$ by congruent balls. We get a covering
of $\mathbb{R}^n$ by applying Theorem~\ref{thm:main} to the torus
$\mathbb{T}^n = \mathbb{R}^n / \mathbb{Z}^n$ which is a compact metric
space satisfying properties (a) and (b). Then we periodically extend
the obtained covering of $\mathbb{T}^n$ to a covering of the entire
$\mathbb{R}^n$ having the same covering density.

We repeat the choices and calculations as in the previous section
(which are slightly simpler here because clearly
$\omega_{tr}/\omega_r = t^n$ holds where here $\omega$ denotes the
Lebesgues measure) and get:

\begin{corollary}
 The covering density of the $n$-dimensional Euclidean space
 by congruent balls is at most
\[
\left(1 + \frac{1}{\mu - 1}\right)  (n \ln \mu n + 1) \text{ for all }
\mu > 1.
\]
In particular, for $\mu = \ln n$, the covering density is at most
\[
n \ln n + n \ln\ln n + n + o(n).
\]
\end{corollary}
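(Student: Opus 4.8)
The plan is to transfer the spherical argument of the previous section almost verbatim, the only difference being that on the torus $\mathbb{T}^n=\mathbb{R}^n/\mathbb{Z}^n$ the Lebesgue measure of a ball is an honest power of the radius. First I would observe that $(\mathbb{T}^n,d,\omega)$, with $d$ the quotient (flat) metric and $\omega$ the normalized Lebesgue measure, is a compact metric space satisfying (a) and (b): translation invariance of Lebesgue measure gives (a), and (b) is clear since small balls have positive volume. So Theorem~\ref{thm:main} applies and yields, for every $\varepsilon$ with $r/2>\varepsilon>0$, a covering of $\mathbb{T}^n$ by balls of radius $r$ of density at most \eqref{eq:density-bound}.

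Next I would record the exact analogue of the B\"or\"oczky--Wintsche estimate \eqref{eq:bw-bound}: on $\mathbb{T}^n$ one has $\omega_{tr}/\omega_r=t^n$ for all $r,tr$ small enough that the balls are genuine Euclidean balls (say $tr<1/2$), simply because the volume of a Euclidean $n$-ball scales like the $n$-th power of the radius and the quotient map is a local isometry. With this identity the chain of inequalities in Section~\ref{ssec:sphere} goes through with equalities replacing the first ``$\leq$''. Concretely, I would again set $\varepsilon=r/(\mu n+1)$ with $\mu>1$, so that $t=r/(r-\varepsilon)=1+1/(\mu n)$ and $t'=(r-\varepsilon)/\varepsilon=\mu n$, and then
\[
\frac{\omega_r}{\omega_{r-\varepsilon}}\!\left(\ln\!\left(\frac{\omega_{r-\varepsilon}}{\omega_\varepsilon}\right)+1\right)
= \left(1+\frac{1}{\mu n}\right)^{\!n}(n\ln\mu n+1)
\leq e^{1/\mu}(n\ln\mu n+1)
\leq \left(1+\frac{1}{\mu-1}\right)(n\ln\mu n+1),
\]
using $(1+x)^{1/x}\le e$ and $e^{1/\mu}\le 1+1/(\mu-1)$ for $\mu>1$.

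Then I would pass from $\mathbb{T}^n$ back to $\mathbb{R}^n$: lift the finite covering of the torus to a $\mathbb{Z}^n$-periodic family of balls in $\mathbb{R}^n$; since the quotient map is a local isometry and the covering radius $r$ can be taken smaller than $1/2$, the lifted balls cover all of $\mathbb{R}^n$, and the covering density (number of ball centers per unit volume, times the volume of one ball) is unchanged. This establishes the stated bound $\left(1+\frac{1}{\mu-1}\right)(n\ln\mu n+1)$ for all $\mu>1$. Finally, specializing to $\mu=\ln n$ gives $\left(1+\frac{1}{\ln n-1}\right)(n\ln(n\ln n)+1)=n\ln n+n\ln\ln n+n+o(n)$, which is the ``in particular'' clause.

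The only genuine point requiring care — and the one I expect to be the main obstacle — is the reduction from $\mathbb{R}^n$ to $\mathbb{T}^n$, i.e.\ checking that periodically extending a torus covering really produces an $\mathbb{R}^n$-covering of the same density without boundary losses. This needs the radius to be small compared to the injectivity radius of the torus, which is harmless after rescaling $\mathbb{Z}^n$ to a lattice of large covolume (or equivalently taking $r<1/2$), and one should note that the covering density is a scale-invariant quantity so the answer depends only on $n$. Everything else is a routine repetition of the spherical computation with the sharper identity $\omega_{tr}/\omega_r=t^n$ in place of the inequality \eqref{eq:bw-bound}.
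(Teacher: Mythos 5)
Your proposal is correct and follows essentially the same route as the paper: apply Theorem~\ref{thm:main} on the torus $\mathbb{T}^n$ where $\omega_{tr}/\omega_r=t^n$ holds exactly, repeat the spherical computation with $\varepsilon=r/(\mu n+1)$, and periodically extend the torus covering to $\mathbb{R}^n$ with unchanged density. Your extra care about taking $r$ below the injectivity radius and the scale invariance of the density is a detail the paper leaves implicit, but it does not change the argument.
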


We remark that this bound coincides with the currently best known
bound by G. F\'ejes Toth \cite{FejesToth2009a} coming from a
deterministic construction. The best known bound coming from a
randomized construction is $(1/2 + o(1)) n \ln n$ due to Dumer
\cite{Dumer2007a}

\subsection{More general coverings}
\label{ssec:general}

At last we want to demonstrate that the greedy approach to geometric
covering problems is quite flexible. It is not restricted to finding
coverings of compact metric spaces by balls but can be extended to
finding coverings of compact metric spaces by finite unions of balls
\[
\bigcup_{i=1}^N B(y_i,r),
\]
where we choose the initial points $y_1,\ldots , y_N\in X$ arbitrarily. 

We make this statement precise in the general setting of a compact
metric space $(X,d)$. Consider the group of continuous isometries of
$(X,d)$, these are all continuous bijective maps $\tau \colon X \to X$
which preserve the distance between every two points $x,y \in X$. We
assume that the group acts transitively on $X$ and that
$\omega(\tau A) = \omega(A)$ holds for all continuous isometries
$\tau$ and all measurable sets $A$. Then by the theorem of
Arzel\`a-Ascoli (see for example \cite[Chapter 4.6]{Folland1999a}) the
group of continuous isometries is relatively compact in the compact
space of continuous maps mapping $X$ to itself equipped with the
supremum norm. We need this compactness for Lemma
\ref{lem:welldefined}. So we can transfer the analysis of the greedy
algorithm given in Section~\ref{sec:proof} to this setting.

With small modifications this extension can for example be applied to
prove the following theorem due to Nasz\'odi \cite[Theorem
1.3]{Naszodi2016a}:

\begin{theorem}
\label{Naszth}
Let $K\subseteq \R^n$ be a bounded measurable set. Then there is a
covering of $\R^n$ by translated copies of $K$ of density at most
\[
\inf\left\{
\frac{\omega(K)}{\omega(K_{-\delta})}\left(
   \ln\left(\frac{\omega\left(K_{-\delta/2}
     \right)}{\omega(B(0,\delta/2))}\right)  +1\right) :
\delta > 0, K_{-\delta} \neq \emptyset\right\},
\]
where $K_{-\delta} = \{x\in K :\ B(x,\delta)\subseteq K\}$ is the $\delta$-inner parallel body of $K$.
\end{theorem}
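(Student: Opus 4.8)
The plan is to mimic the greedy covering argument of Section~\ref{sec:proof}, but run it on a torus $\mathbb{T}^n = \R^n/(L\mathbb{Z}^n)$ whose fundamental domain is large enough to contain a translate of $K$, and where the ``balls'' $B(x,r-\varepsilon)$ are replaced by translated copies of the inner parallel body $K_{-\delta}$, while the ``small balls'' $B(z,\varepsilon)$ guaranteeing termination are replaced by Euclidean balls $B(z,\delta/2)$. Concretely, for a fixed $\delta>0$ with $K_{-\delta}\neq\emptyset$ I would first pass to $K_{-\delta/2}$: this is again a bounded measurable set, and for any $x$ with $B(x,\delta/2)\subseteq K_{-\delta/2}$ we have $B(x,\delta)\subseteq K$, i.e.\ $x\in K_{-\delta}$. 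The roles played in Section~\ref{sec:proof} by $\omega_{r-\varepsilon}$, by $\omega_\varepsilon$, and by the nesting $B(y,\varepsilon)\subseteq B(y,r-\varepsilon)$ are played here by $\omega(K_{-\delta/2})$, by $\omega(B(0,\delta/2))$, and by the inclusion $B(x,\delta/2)\subseteq x+K_{-\delta/2}$ valid whenever $x\in(K_{-\delta/2})_{-\delta/2}=K_{-\delta}$ (after a translation normalizing $0\in K_{-\delta}$).

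Having set up this dictionary, I would run the greedy algorithm on $\mathbb{T}^n$ exactly as in Algorithm~\ref{algo:Chvcover}: set $S_x^i = (x+K_{-\delta/2}) \setminus \bigcup_{j\le i}(y^j+K_{-\delta/2})$, at each step pick $y^i$ maximizing $\omega(S_x^{i-1})$, and stop once $\bigcup_j (y^j+K)=\mathbb{T}^n$. The three lemmas transfer with cosmetic changes. Lemma~\ref{lem:welldefined} (existence of a maximizer) uses exactly the continuity-via-monotone-convergence argument, now with the continuous isometry group of $\mathbb{T}^n$ acting transitively — this is the place the relative compactness remark in Section~\ref{ssec:general} is needed, and since translations on the torus form a compact group the hypothesis is clearly met. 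Lemma~\ref{lem:termination} goes through because if $z\notin\bigcup_{j<i}(y^j+K)$ then $B(z,\delta/2)$ is disjoint from each $y^j+K_{-\delta/2}$ (if it met $y^j+K_{-\delta/2}$ at some $p$, then $B(p,\delta/2)\subseteq y^j+K$, so $z\in y^j+K$), hence $B(z,\delta/2)\subseteq S_z^{i-1}$ and the measures $\omega(S_{y^j}^{j-1})\ge \omega(B(0,\delta/2))>0$ are disjoint and sum to at most $\omega(\mathbb{T}^n)$; termination follows. Finally the third lemma — the Chv\'atal/Riemann-sum estimate — is formally identical: with the kernel $K(x,y)=\mathbbm 1[\,y\in x+K_{-\delta/2}\,]$ (symmetric since $K_{-\delta/2}$ here should be read as centrally used via $y-x$; if $K$ is not symmetric one uses $K(x,y)=\mathbbm 1[\,y\in x+K_{-\delta/2}\,]$ and its transpose and integrates in the appropriate order, which still yields $\int K(x,y)\,d\omega(y)=\omega(K_{-\delta/2})$ for all $x$), the same $g$ built from the greedy sets gives $\int g\,d\omega=|Y|$ and $\int K(x,y)g(x)\,d\omega(x)\le \ln(\omega(K_{-\delta/2})/\omega(B(0,\delta/2)))+1$, whence $|Y|\cdot\omega(K_{-\delta/2})\le \ln(\omega(K_{-\delta/2})/\omega(B(0,\delta/2)))+1$.

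This bounds the number of translates of $K_{-\delta/2}$ needed, but we want the density of a covering by translates of $K$. Here I would observe that the greedy algorithm actually terminates with a covering of $\mathbb{T}^n$ by the translates $y^j+K$ (that is the stopping condition), and that $|Y|$ is bounded by $\omega(K_{-\delta/2})^{-1}(\ln(\omega(K_{-\delta/2})/\omega(B(0,\delta/2)))+1)$ from the previous paragraph. Wait — that is not quite the claimed constant; the claimed leading factor is $\omega(K)/\omega(K_{-\delta})$. To recover it one does not pass to $K_{-\delta/2}$ in the covering step but rather covers with translates of $K$ while ``charging'' measure $\omega(K_{-\delta})$ per step and using $B(0,\delta)$-balls for termination — I would therefore redo the dictionary with $K$ in the covering role, $K_{-\delta}$ in the ``$r-\varepsilon$'' role, and $B(0,\delta/2)$ in the ``$\varepsilon$'' role, noting $B(x,\delta/2)\subseteq x+K_{-\delta/2}\subseteq x+K_{-\delta/2}$ and the two-step nesting $B(0,\delta/2)\subseteq K_{-\delta/2}\subseteq K_{-\delta}+\!$(ball) that mirrors $r/2>\varepsilon$. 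The covering density on $\mathbb{T}^n$ is then $|Y|\cdot\omega(K)\le \frac{\omega(K)}{\omega(K_{-\delta})}\big(\ln(\omega(K_{-\delta/2})/\omega(B(0,\delta/2)))+1\big)$, and periodically extending to $\R^n$ preserves the density; taking the infimum over admissible $\delta$ gives the theorem. The main obstacle I anticipate is precisely bookkeeping this three-tiered nesting ($B(0,\delta/2)\subseteq K_{-\delta/2}\subseteq K_{-\delta}$ in the right senses, together with a measure-charging set $K$ on the outside) so that all four occurrences of $\delta$ in the statement line up correctly; a secondary point is confirming that the continuity lemma survives when $K$ is merely bounded measurable rather than a metric ball, which it does because the symmetric-difference bound $|f_i(x)-f_i(y)|\le \omega((x+K_{-\delta/2})\triangle(y+K_{-\delta/2}))$ still tends to $0$ as $y\to x$ by dominated convergence applied to $\mathbbm 1_{x+K_{-\delta/2}}$ versus its translate.
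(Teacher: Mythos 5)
Your construction is essentially the paper's second, ``alternative'' route to Theorem~\ref{Naszth} (sketched at the end of Section~\ref{ssec:general}): run the greedy algorithm on a torus with translates of $K_{-\delta/2}$ in the role of $B(x,r-\varepsilon)$, Euclidean balls $B(\cdot,\delta/2)$ in the role of $B(\cdot,\varepsilon)$, and translates of $K$ as the actual covering sets; your disjointness argument for termination and the normalization $0\in K_{-\delta}$ (giving $B(0,\delta/2)\subseteq K_{-\delta/2}$) are exactly the substitutes for the triangle-inequality steps the paper flags in Lemmas~\ref{lem:welldefined} and~\ref{lem:termination}. The paper's primary sketch instead approximates $K$, $K_{-\delta}$, $K_{-\delta/2}$ by finite unions of balls and invokes the union-of-balls extension; your direct route is equally legitimate.

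The place where you go astray is the ``Wait'' in your last paragraph. Your first dictionary already proves the theorem: it yields $|Y|\,\omega(K_{-\delta/2})\le \ln\bigl(\omega(K_{-\delta/2})/\omega(B(0,\delta/2))\bigr)+1$, hence a covering by translates of $K$ of density $|Y|\,\omega(K)\le \frac{\omega(K)}{\omega(K_{-\delta/2})}\bigl(\ln(\omega(K_{-\delta/2})/\omega(B(0,\delta/2)))+1\bigr)$, and since $K_{-\delta}\subseteq K_{-\delta/2}$ this is at most the stated bound with $\omega(K_{-\delta})$ in the denominator --- indeed it is precisely the strengthening the paper records (``improving the result of Nasz\'odi slightly''). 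The redo with $K_{-\delta}$ in the ``$r-\varepsilon$'' role should be dropped: it would require inclusions such as $B(0,\delta/2)\subseteq K_{-\delta}$ and $K_{-\delta/2}\subseteq K_{-\delta}+B(0,\delta/2)$, which fail for general bounded measurable $K$, and in any case that dictionary would put $\omega(K_{-\delta})$, not $\omega(K_{-\delta/2})$, inside the logarithm, so it does not produce the mixed constant you wrote. Two smaller points to tidy: take the kernel to be $\mathbbm{1}\,[x\in y+K_{-\delta/2}]$, so that for fixed $y$ its slice is exactly $y+K_{-\delta/2}$ (matching the identity $(y+K_{-\delta/2})\cap S_{y^i}^{i-1}=S_y^{i-1}\setminus S_y^i$), while $\int K(x,y)\,d\omega(y)=\omega(-K_{-\delta/2})=\omega(K_{-\delta/2})$ by reflection invariance of the measure --- no symmetry of $K$ is needed; and justify the continuity in Lemma~\ref{lem:welldefined} by the $L^1$-continuity of translations, $\omega\bigl(C\,\triangle\,(C+v)\bigr)\to 0$ as $v\to 0$, rather than dominated convergence, since the translated indicators need not converge pointwise for a merely measurable $C$.
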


Here, we only sketch the proof, though filling in the details is
easy. As in Section~\ref{ssec:space} we can work on the torus
$\mathbb{T}^n$. We approximate the body $K$ and its inner parallel
bodies by a finite union of balls for which
\[
\bigcup_{i=1}^N B(y_i, \delta) \subseteq K
\]
and
\[
K_{-\delta} \subseteq \bigcup_{i=1}^N B(y_i, \delta/2) \subseteq K_{-\delta/2}
\]
holds. In the end going back from the torus $\mathbb{T}^n$ to $\mathbb{R}^n$
we get a covering of $\mathbb{R}^n$ by translated copies of $K$ with
density at most
\[
\inf\left\{
\frac{\omega(K)}{\omega(K_{-\delta/2})}\left(
   \ln\left(\frac{\omega\left(K_{-\delta/2}
     \right)}{\omega(B(0,\delta/2))}\right)  +1\right) :
\delta > 0, K_{-\delta} \neq \emptyset\right\},
\]
improving the result of Nasz\'odi slightly.

Another alternative of proving this bound is to verify that the proof
of Theorem \ref{thm:main} also holds if we consider translates of
$K_{-\delta/2}\subseteq \R^n$ instead of balls
$B(x,r-\varepsilon)$. This further requires that $K_{-\delta}$ is
nonempty and to consider translates of Minkowski sums
$x + K_{-\delta/2} + B(0, \zeta)$ instead of
$B(x, r - \varepsilon + \zeta)$ in the parts of the proofs of Lemmas
\ref{lem:welldefined} and \ref{lem:termination} where we apply the
triangle inequality.

\section*{Acknowledgements}

We thank Markus Schweighofer, Cordian Riener, and the anonymous referee for helpful remarks.


\begin{thebibliography}{99}

\bibitem{Artstein2011a}
S.~Artstein-Avidan and O.~Raz, 
Weighted covering numbers of convex sets, 
\textit{Adv. Math.}
\textbf{227} (2011), 730--744.

\bibitem{Artstein2015a}
S.~Artstein-Avidan and B.A.~Slomka, 
On weighted covering numbers and the Levi-
Hadwiger conjecture, 
\textit{Israel J. Math.} \textbf{209} (2015), 125--155. 
(\url{https://arxiv.org/abs/1310.7892})

\bibitem{Böröczky2003a}
K.~B\" or\" oczky, Jr.\ and G.~Wintsche,
Covering the sphere by equal spherical balls,
pp.\ 235--251 in \textsl{Discrete and Computational Geometry}, Algorithms Combin., vol.\ 25, Springer,  2003.

\bibitem{Chvatal1979a}
V.~Chv\' atal,  
A greedy heuristic for the set-covering problem,
\textit{Math. Oper. Res.} \textbf{4} (1979), 233--235.

\bibitem{Cucker2002a}
F.~Cucker and S.~Smale,  
On the mathematical foundations of learning,
\textit{Bull. Amer. Math. Soc.} \textbf{39} (2002), 1--49.

\bibitem{Dinur2014a}
I.~Dinur and D.~Steurer,
Analytical approach to parallel repetition,
pages 624--633 in Proceedings of the 2014 ACM Symposium on Theory of Computing, New York, 2014. 
(\url{https://arxiv.org/abs/1305.1979})

\bibitem{Dumer2007a}
I.~Dumer,
Covering spheres with spheres,
\textit{Discrete Comput. Geom.} \textbf{38} (2007), 665--679.
(\url{https://arxiv.org/abs/math/0606002})

\bibitem{FejesToth2009a}
G.~Fejes T\'oth,
A note on covering by convex bodies,
\textit{Canad. Math. Bull.} \textbf{52} (2009), 361--365. 

\bibitem{Folland1999a}
G.B.~Folland,
\textit{Real analysis. Modern techniques and their application. Second edition},
John Wiley \& Sons, 1999.

\bibitem{Foucart2013a}
S.~Foucart and H.~Rauhut,
\textit{A mathematical introduction to compressive sensing},
Birkh\"auser/Springer, 2013.

\bibitem{Johnson1974a}
D.S.~Johnson,  
Approximation algorithms for combinatorial problems,
\textit{Journal of Computer and System Sciences} \textbf{9} (1974), 256--278.

\bibitem{Lovasz1975a}
L.~Lov\' asz,  
On the ratio of optimal integral and fractional covers,
\textit{Discrete Math.} \textbf{13} (1975), 383--390.

\bibitem{Naszodi2016a}
M.~Nasz\' odi,
On some covering problems in geometry,
\textit{Proc. Amer. Math. Soc.} \textbf{13} (2016), 3555--3562.
(\url{https://arxiv.org/abs/1404.1691})

\bibitem{Naszodi2017a}
M.~Nasz\' odi and A.~Polyanskii,
Approximating set multi-covers, 
\textit{European J. Combin.} \textbf{67} (2018), 174--180.
(\url{https://arxiv.org/abs/1608.01292})

\bibitem{Prosanov2017a}
R.~Prosanov, 
Chromatic numbers of spheres, 
arXiv:1711.03193 [math.CO], 20 pages.
(\url{https://arxiv.org/abs/1711.03193})

\bibitem{Rogers1964a}
C.A.~Rogers,
\textit{Packing and covering},
Cambridge University Press, 1964.

\bibitem{Stein1974a}
S.K.~Stein,  
Two combinatorial covering theorems,
\textit{J. Combin. Theory Ser. A} \textbf{16} (1974), 391--397.

\end{thebibliography}
\end{document}